\newcommand{\RR}{\mathbb{R}}
\newcommand{\ZZ}{\mathbb{Z}}
\newcommand{\bv}{\mathbf{v}}
\newcommand{\bw}{\mathbf{w}}
\newcommand{\bfone}{\mathbf{1}}
\numberwithin{equation}{section}
\newtheorem{theorem}[equation]{Theorem}
\newtheorem{lemma}[equation]{Lemma}
\newtheorem{cor}[equation]{Corollary}
\DeclareMathOperator{\PSL}{PSL}
\begin{document}

\title[Product-free subsets of groups, then and now]{Product-free subsets of groups, then and now}
\author{Kiran S. Kedlaya}
\address{Department of Mathematics, Massachusetts Institute of Technology,
77 Massachu\-setts Avenue, Cambridge, MA 02139}
\email{kedlaya@mit.edu}
\urladdr{http://math.mit.edu/~kedlaya/}
\dedicatory{Dedicated to Joe Gallian on his 65th birthday and the 30th
anniversary of the Duluth REU}
\thanks{The author was supported by 
NSF CAREER grant DMS-0545904 and a Sloan Research Fellowship.}
\date{November 7, 2007}
\subjclass[2000]{Primary 20D60; secondary 20P05.}

\maketitle

\section{Introduction}

Let $G$ be a group. A subset $S$ of $G$ is \emph{product-free} if
there do not exist $a,b,c \in S$ (not necessarily distinct\footnote{In some
sources, one does require $a \neq b$. For instance, as noted in
\cite{guiduci-hart}, I mistakenly assumed this in 
\cite[Theorem~3]{kedlaya-amm}.})
such that $ab=c$. 

One can ask about the existence of large product-free subsets for various
groups, such as the groups of integers (see next section), or compact
topological groups (as suggested in \cite{kedlaya-amm}). For the rest of this
paper, however, I will require $G$ to be a finite group of order
$n > 1$. Let $\alpha(G)$ denote the size of the largest product-free subset of 
$G$; put $\beta(G) = \alpha(G)/n$, so that $\beta(G)$ is the density of
the largest product-free subset. What can one say about 
$\alpha(G)$ or $\beta(G)$ as a function
of $G$, or as a function of $n$?
(Some of our answers will include an unspecified positive constant;
I will always call this constant $c$.)

The purpose of this paper is threefold. I first
review the history of this problem, up to and including
my involvement via Joe Gallian's REU (Research Experience for Undergraduates)
at the University of Minnesota, Duluth,
in 1994;
since I did this once already in \cite{kedlaya-amm}, I will be briefer
here. I then describe some very recent progress made by Gowers
\cite{gowers}. Finally, I speculate on the gap between the lower and upper
bounds, and revisit my 1994 argument to show that this gap cannot be
closed using Gowers's argument as given.

Note the usual convention that multiplication and inversion are
permitted to act on subsets of $G$, i.e., for $A,B \subseteq G$,
\[
AB = \{ab: a \in A, b \in B\}, \qquad A^{-1} = \{a^{-1}: a \in A\}.
\]

\section{Origins: the abelian case}

In the abelian case, product-free subsets are more customarily called
\emph{sum-free} subsets.
The first group in which such subsets were studied is the
group of integers $\ZZ$; the first reference I could find for 
this is Abbott and Moser \cite{abbott-moser},
who expanded upon Schur's theorem that the set
$\{1, \dots, \lfloor n! e \rfloor\}$ cannot be partitioned into
$n$ sum-free sets. This led naturally to considering sum-free
subsets of finite abelian groups, for which the following is easy.
\begin{theorem}
For $G$ abelian, $\beta(G) \geq \frac{2}{7}$.
\end{theorem}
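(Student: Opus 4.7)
\medskip

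\noindent\textbf{Proof plan.} The plan is to reduce to cyclic groups of prime order via a quotient argument, then construct an explicit sum-free ``middle interval'' in each cyclic case.

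First I would record the \emph{pullback lemma}: if $\phi : G \twoheadrightarrow H$ is a surjective group homomorphism and $T \subseteq H$ is sum-free, then $\phi^{-1}(T) \subseteq G$ is sum-free, and has the same density as $T$. This is immediate, because $a+b = c$ in $G$ forces $\phi(a) + \phi(b) = \phi(c)$ in $H$. Combined with the structure theorem for finite abelian groups (or Cauchy's theorem together with the fact that every subgroup is normal), this reduces the problem to exhibiting, for each prime $p$ dividing $|G|$, a sum-free subset of $\ZZ/p\ZZ$ of density at least $2/7$ --- and we are free to choose which prime $p$ to use.

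Next I would split into three cases according to $p \bmod 3$:
\begin{itemize}
\item If some $p \mid |G|$ satisfies $p \equiv 2 \pmod 3$, write $p = 3k+2$ and take the middle interval $\{k+1, \ldots, 2k+1\} \subseteq \ZZ/p\ZZ$. Sums of two elements lie in $\{2k+2, \ldots, 4k+2\} \pmod p$, which reduces to $\{2k+2, \ldots, 3k+1\} \cup \{0, \ldots, k\}$, disjoint from the original interval. This set has density $(k+1)/(3k+2) > 1/3$.
\item If $3 \mid |G|$, quotient to $\ZZ/3\ZZ$ and take $\{1\}$, of density $1/3$.
\item Otherwise every prime $p \mid |G|$ is $\equiv 1 \pmod 3$, so the smallest such $p$ is at least $7$; write $p = 3k+1$ with $k \geq 2$ and take $\{k+1, \ldots, 2k\} \subseteq \ZZ/p\ZZ$. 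A similar modular check shows this is sum-free, with density $k/(3k+1)$.
\end{itemize}
In the first two cases the density exceeds $2/7$ comfortably; in the third, the density $k/(3k+1)$ is an increasing function of $k$ whose minimum over $k \geq 2$ is attained at $k=2$, i.e.\ $p=7$, yielding exactly $2/7$. Taking the best quotient available to $G$ produces a sum-free subset of density at least $2/7$ in all cases.

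The main obstacle, such as it is, is not any single computation but rather \emph{noticing} that the bound $2/7$ is dictated by the single group $\ZZ/7\ZZ$: one must organize the case analysis around the residue of $p$ modulo $3$ in order to see that the worst case is precisely a group all of whose prime divisors are $\equiv 1 \pmod 3$, and then that within that class the smallest prime $7$ gives the tight bound. Verifying sum-freeness of the two middle-interval constructions is a routine exercise in modular arithmetic once the right intervals are written down.
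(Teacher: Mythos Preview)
Your proof is correct and follows essentially the same approach as the paper: pull back a sum-free ``middle interval'' from a cyclic quotient $\ZZ/p\ZZ$ via the quotient lemma (your pullback lemma is exactly Lemma~\ref{L:quotient}). The paper packages the cyclic case into a single formula $\alpha(\ZZ/p\ZZ) \geq \lfloor (p+1)/3 \rfloor$ for $p>2$, whereas you unpack this into an explicit case split on $p \bmod 3$; the resulting intervals coincide (up to a harmless shift in the $p\equiv 1$ case), and your version has the minor advantage of making the extremal role of $p=7$ visible.
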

\begin{proof}
For $G = \ZZ/p\ZZ$ with $p > 2$,
we have $\alpha(G) \geq \lfloor 
\frac{p+1}{3} \rfloor$ by taking 
\[
S = \left\{ \left\lfloor \frac{p+1}{3} 
\right\rfloor, \dots, 2 \left\lfloor \frac{p+1}{3}
\right\rfloor - 1 \right\}.
\]
Then apply the following lemma.
\end{proof}

\begin{lemma} \label{L:quotient}
For $G$ arbitrary, if $H$ is a quotient of $G$, then
\[
\beta(G) \geq \beta(H).
\]
\end{lemma}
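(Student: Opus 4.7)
The plan is to use the most obvious construction: pull back a product-free subset of $H$ along the quotient map $\pi\colon G \to H$ and check that density is preserved and product-freeness is inherited.

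More precisely, I would first choose a product-free subset $T \subseteq H$ with $|T|/|H| = \beta(H)$ (such a $T$ exists since $H$ is finite), and set $S = \pi^{-1}(T) \subseteq G$. Writing $K = \ker \pi$, every fiber of $\pi$ has size $|K|$, so $|S| = |T| \cdot |K|$, while $|G| = |H| \cdot |K|$. Hence $|S|/|G| = |T|/|H| = \beta(H)$.

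Next I would verify that $S$ is product-free in $G$. Suppose for contradiction that $a,b,c \in S$ satisfy $ab = c$. Applying $\pi$, which is a group homomorphism, gives $\pi(a)\pi(b) = \pi(c)$; but $\pi(a),\pi(b),\pi(c) \in T$ by construction, contradicting the product-freeness of $T$ in $H$. Therefore $\alpha(G) \geq |S| = \beta(H) \cdot |G|$, which rearranges to $\beta(G) \geq \beta(H)$.

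There is no real obstacle here: the argument is purely formal and relies only on $\pi$ being a surjective homomorphism with fibers of equal size. The only mild point worth flagging is that one must take $a,b,c$ as not-necessarily-distinct elements, matching the definition of product-free given earlier; the argument handles this case for free, since the contradiction in $H$ does not require distinctness either.
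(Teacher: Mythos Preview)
Your proposal is correct and matches the paper's proof essentially verbatim: both pull back a maximum product-free subset of $H$ along the quotient map, observe that preimages have size scaled by $\#G/\#H$, and note that product-freeness is preserved because $\pi$ is a homomorphism. The paper states this in two sentences without spelling out the contradiction step, but there is no difference in content.
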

\begin{proof}
Let $S'$ be a product-free subset of $H$ of size $\alpha(H)$.
The preimage of $S'$ in $G$
is product-free of size $\#S' \#G/\#H$, so
$\alpha(G) \geq \alpha(H)\#G/\#H$.
\end{proof}

In fact, one can prove an exact formula for $\alpha(G)$ showing
that this construction is essentially optimal. Many cases were
established around 1970, but only in 2005 was the proof of the 
following result finally
completed by Green and Ruzsa \cite{green-ruzsa}. 
\begin{theorem}[Green-Ruzsa]
Suppose that $G$ is abelian.
\begin{enumerate}
\item[(a)] If $n$ is divisible by a prime $p \equiv 2 \pmod{3}$,
then for the least such $p$, $\alpha(G) = \frac{n}{3} + \frac{n}{3p}$.
\item[(b)] Otherwise, if $3 | n$, then $\alpha(G) = \frac{n}{3}$.
\item[(c)] Otherwise, $\alpha(G) = \frac{n}{3} - \frac{n}{3m}$,
for $m$ the exponent (largest order of any element) of $G$.
\end{enumerate}
\end{theorem}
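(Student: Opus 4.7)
The plan is to prove matching lower and upper bounds on $\alpha(G)$ in each of (a), (b), (c). The lower bounds come from explicit interval constructions passed through Lemma~\ref{L:quotient}, while the upper bounds require a Fourier-analytic structural analysis.

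\emph{Lower bounds.} In each case I would identify a cyclic quotient of $G$ and pull back an interval. In case (a), the least prime $p \equiv 2 \pmod{3}$ dividing $n$ gives a quotient $\ZZ/p\ZZ$, in which the interval $\{k, k+1, \dots, 2k-1\}$ with $k = (p+1)/3$ is sum-free of size $(p+1)/3$; Lemma~\ref{L:quotient} then gives $\alpha(G) \geq n/3 + n/(3p)$. In case (b), the quotient $\ZZ/3\ZZ$ with sum-free subset $\{1\}$ gives $\alpha(G) \geq n/3$. In case (c), all prime divisors of $n$ (and hence of the exponent $m$) are $\equiv 1 \pmod{3}$, so $m \equiv 1 \pmod{3}$; the structure theorem for finite abelian groups produces a surjection $G \to \ZZ/m\ZZ$, and the analogous interval of size $(m-1)/3$ is sum-free, giving $\alpha(G) \geq n/3 - n/(3m)$.

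\emph{Upper bounds.} For a sum-free $S \subseteq G$, I would set $f = 1_S$ and study $\hat{f}(\chi) = \sum_x f(x)\chi(x)$ as $\chi$ ranges over the character group of $G$. The sum-free hypothesis translates to
\[
0 = \sum_{a,b,c \in S} 1_{a+b=c} = \frac{1}{n}\sum_{\chi} \hat{f}(\chi)^{2}\,\overline{\hat{f}(\chi)}.
\]
Isolating the trivial character (which contributes $|S|^{3}/n$) and combining with Parseval's identity $\sum_\chi |\hat{f}(\chi)|^2 = n|S|$ yields the inequality $|S|^{2} \le n \max_{\chi \neq 1} |\hat{f}(\chi)|$.

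The main obstacle is converting this coarse bound into the sharp formula, since the trivial estimate $|\hat{f}(\chi)| \leq |S|$ only gives the vacuous $|S| \leq n$. One has to exploit that a near-maximal Fourier coefficient forces $S$ to be concentrated on cosets of the kernel $H$ of the extremizing character, so that $S$ is essentially the lift of a sum-free subset of the cyclic quotient $G/H$. An induction on the order of $G$, using Kneser's theorem on sumsets to handle the near-extremal case, then reduces the problem to understanding sum-free subsets of cyclic groups, where the interval construction is known to be optimal by the classical results of Diananda-Yap and Rhemtulla-Street. Matching the quotient arising from the Fourier extremizer to precisely the one used in the lower-bound construction in each of (a), (b), (c) is the delicate combinatorial content that took until 2005 to complete.
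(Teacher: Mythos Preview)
The paper does not actually prove this theorem: it is stated with attribution to Green and Ruzsa \cite{green-ruzsa} and no proof is given, the surrounding text noting only that ``many cases were established around 1970, but only in 2005 was the proof of the following result finally completed by Green and Ruzsa.'' So there is no proof in the paper to compare your proposal against.

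As for the proposal itself, your lower-bound paragraph is fine and is exactly the kind of argument the paper uses for the weaker $\beta(G)\ge 2/7$ bound. Your upper-bound paragraph, however, is a narrative rather than a proof: the identity you display and the Parseval step are correct, but the sentence ``a near-maximal Fourier coefficient forces $S$ to be concentrated on cosets \dots\ an induction using Kneser's theorem \dots\ reduces to cyclic groups'' is precisely the hard part of the Green--Ruzsa paper, and you have not supplied any of its content. In particular, you have not explained how to rule out that the extremizing character has small-order image (so the quotient is not the ``right'' cyclic group), nor how Kneser's theorem enters to control the structure when $|S|$ is close to $n/3$, nor why the cyclic case itself yields exactly the claimed constants in cases (a)--(c). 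Your closing remark that this ``is the delicate combinatorial content that took until 2005 to complete'' is candid, but it means the proposal is an outline of where the difficulty lies, not a proof.
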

One possible explanation for the delay
is that it took this long for this subject to migrate into the mathematical
mainstream, as part of the modern subject of \textit{additive combinatorics}
\cite{tao-vu}; see Section~\ref{sec:interlude}.

The first appearance of the problem of computing $\alpha(G)$ for 
nonabelian $G$ seems to have been
in a 1985 paper of Babai and S\'os \cite{babai-sos}.
In fact, the problem appears there as an afterthought;
the authors were more interested in \textit{Sidon sets},
in which the equation $ab^{-1} = cd^{-1}$ 
has no solutions with $a,b,c,d$ taking
at least three distinct values. This construction can be related to 
embeddings of graphs as induced subgraphs of Cayley graphs;
product-free subsets arise because they relate to the special case
of embedding stars in Cayley graphs. 
Nonetheless, the Babai-S\'os paper is the first
to make a nontrivial assertion about $\alpha(G)$ for general $G$; see 
Theorem~\ref{T:babai-sos}.

This circumstance suggests rightly
 that the product-free problem is only one of a broad class of
problems about structured subsets of groups; this class can be
considered a nonabelian version of additive combinatorics,
and progress on problems in this class has been driven as much by
the development of the abelian theory as by interest from
applications in theoretical computer science. An example of the latter
is a problem of Cohn and Umans \cite{cohn-umans} (see also
\cite{cksu}): to find groups $G$ 
admitting large subsets $S_1, S_2, S_3$ such that the
equation $a_1 b_1^{-1} a_2 b_2^{-1} a_3 b_3^{-1} = e$, with
$a_i, b_i \in S_i$, has only solutions with $a_i = b_i$ for all $i$.
A sufficiently good construction would resolve an ancient problem
in computational algebra: to prove that two $n \times n$ matrices can be
multiplied using $O(n^{2+\epsilon})$ ring operations for any $\epsilon > 0$.

\section{Lower bounds: Duluth, 1994}

Upon my arrival at the REU in 1994, Joe gave me the paper of Babai and S\'os,
perhaps hoping I would have some new insight about Sidon sets. Instead,
I took the path less traveled and started thinking about 
product-free sets.

The construction of product-free subsets given in \cite{babai-sos} 
is quite simple: if $H$ is a proper subgroup of $G$, then any
nontrivial coset of $H$ is product-free. This is trivial to
prove directly, but it occurred to me to formulate it in terms of
permutation actions.  Recall that
specifying a transitive permutation action of the group $G$ is the same
as simply identifying a conjugacy class of subgroups: if $H$ is one
of the subgroups, the action is left multiplication on
left cosets of $H$. (Conversely, given an action, the point stabilizers
are conjugate subgroups.) The construction of Babai and S\'os can then
be described as follows.
\begin{theorem}[Babai-S\'os] \label{T:babai-sos}
For $G$ admitting a transitive action on $\{1,\dots,m\}$ with $m>1$,
$\beta(G) \geq m^{-1}$.
\end{theorem}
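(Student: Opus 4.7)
The plan is straightforward: recognize that giving a transitive action of $G$ on $\{1,\dots,m\}$ is the same as choosing a subgroup $H$ of index $m$ (the stabilizer of any basepoint), so that $|H| = n/m$. Since $m > 1$, this $H$ is a proper subgroup, so I can pick any $g \in G \setminus H$ and set $S = gH$; then $|S| = n/m$, and the desired lower bound will follow once I verify that $S$ is product-free.

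The verification is a one-line computation. Given $a, b, c \in S$, write $a = gh_1$, $b = gh_2$, $c = gh_3$ with $h_i \in H$; then $ab = c$ reads $gh_1 g h_2 = gh_3$, which rearranges to $g = h_1^{-1} h_3 h_2^{-1} \in H$, contradicting $g \notin H$. Hence $S$ is product-free, and $\beta(G) \geq |S|/n = 1/m$.

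There is really no obstacle here; the substance of the theorem is the reformulation of the Babai--S\'os coset construction in permutation-theoretic terms. The translation replaces ``$G$ has a proper subgroup of large order'' by ``$G$ admits a transitive action of small degree'', the latter being the more flexible invariant for the refinements that come later in the paper.
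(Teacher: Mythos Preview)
Your argument is correct and is essentially the paper's own proof in different clothing: the paper takes $S = \{g \in G : g(1) = 2\}$, which is exactly a nontrivial left coset of the point stabilizer $H$, and its product-freeness follows because $a(1)=2$ forces $a(2)\neq 2$ (permutations are injective), which is the permutation-language version of your computation $g = h_1^{-1} h_3 h_2^{-1} \in H$. As the paragraph preceding the theorem already explains, the coset and permutation formulations are the same construction.
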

\begin{proof}
The set of all $g \in G$ such that $g(1) = 2$ is product-free of size $n/m$.
\end{proof}

I next wondered: what if you allow $g$ to carry 1 into a slightly
larger set, say a set $T$ of $k$ elements? 
You would still get a product-free set
if you forced each $x \in T$ to map to something not in $T$.
This led to the following argument.
\begin{theorem} \label{T:kedlaya}
For $G$ admitting a transitive action  on $\{1,\dots,m\}$ with $m>1$,
$\beta(G) \geq c m^{-1/2}$. 
\end{theorem}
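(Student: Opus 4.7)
The plan is to generalize the Babai--S\'os construction by letting $g$ carry $1$ into a cleverly chosen set $T \subseteq \{1,\dots,m\}$ of size about $\sqrt{m}$, while insisting that $g$ map every element of $T$ out of $T$. Concretely, set
\[
S(T) \;=\; \{\, g \in G : g(1) \in T,\ g(T) \cap T = \emptyset \,\}.
\]
Product-freeness is immediate: if $a, b \in S(T)$ then $b(1) \in T$ while $a$ sends $T$ into its complement, so $(ab)(1) = a(b(1)) \notin T$, hence $ab \notin S(T)$. The goal is then to produce a $T$ with $|S(T)| \geq c n m^{-1/2}$.

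For this I would apply the probabilistic method, sampling $T$ by including each $i \in \{1,\dots,m\}$ independently with probability $p = (4m)^{-1/2}$. Writing $T_x := \mathbf{1}[x \in T]$, the condition $g \in S(T)$ reads ``$T_{g(1)} = 1$ and $T_x T_{g(x)} = 0$ for every $x$'', and since each forbidden pair $(x, g(x))$ lies within a single cycle of $g$, these events factor across the cycle decomposition of $g$. For any $g$ with $g(1) \neq 1$, the cycle $C_0$ through $1$ contributes a factor of at least $p(1-p)^2$ from the enforced values $T_{g(1)} = 1$ and $T_1 = T_{g^2(1)} = 0$; every other cycle $C$ of length $\ell \geq 2$ contributes at least $1 - \ell p^2$ by a union bound over the $\ell$ cyclically adjacent pairs; and every fixed point of $g$ contributes a factor $1 - p$. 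Assembling these and using $\sum_{\text{cycles}} \ell = m$, I obtain
\[
\Pr[g \in S(T)] \;\geq\; p(1-p)^{f(g)+2}(1 - mp^2) \;\geq\; c\, p
\]
whenever the number of fixed points $f(g)$ is at most $\sqrt{m}$.

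It then remains to sum over $g$. By transitivity, orbit-counting yields $\sum_{g \in G} f(g) = n$, so Markov's inequality shows that at most $n/\sqrt{m}$ elements $g$ satisfy $f(g) > \sqrt{m}$; and only the $|G_1| = n/m$ elements stabilizing $1$ are thrown out by requiring $g(1) \neq 1$. Both counts are $o(n)$, so $\mathbb{E}_T\, |S(T)| \geq c\, n\, p \geq c' n m^{-1/2}$, and some realization of $T$ attains this bound. The main obstacle is the displayed inequality: a naive union bound on $\Pr[g(T) \cap T \neq \emptyset]$ combined additively with $\Pr[g(1) \in T] = p$ via $\Pr[A \cap B] \geq \Pr[A] + \Pr[B] - 1$ is useless at $p \asymp m^{-1/2}$, and the cycle decomposition is essential precisely because it converts additive loss across cycles into multiplicative loss, genuinely exploiting the independence of the $T_x$.
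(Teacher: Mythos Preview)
Your argument is correct and your set $S(T)$ coincides exactly with the paper's, but the route to the size estimate is genuinely different.

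The paper samples $T$ uniformly among $k$-element subsets of $\{2,\dots,m\}$ and bounds $\mathbb{E}|S(T)|$ by a single global union bound: it writes $|S(T)| \geq |\{g:g(1)\in T\}| - \sum_{y\in T}|\{g:g(1),g(y)\in T\}|$, notes the first term equals $kn/m$ deterministically, and averages the second over $T$ using only that $\Pr[\,g(1),y,g(y)\in T\,]$ is of order $(k/m)^3$. This yields $\mathbb{E}|S(T)| \geq kn/m - k^3 n/(m-2)^2$ directly, with no cycle decomposition, no fixed-point bookkeeping, and no Burnside; optimizing at $k\sim\sqrt{m/3}$ finishes. So your closing claim that the cycle decomposition is ``essential'' is not quite right: the paper sidesteps the difficulty entirely by never attempting to lower-bound $\Pr[g\in S(T)]$ for an individual $g$, working instead with the aggregate count via linearity, where the union bound over the $O(nm)$ triples $(g(1),y,g(y))$ is already sharp enough at $k\asymp m^{1/2}$. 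Your per-$g$ analysis buys a cleaner view of which group elements actually contribute (those with few fixed points), at the cost of the extra machinery.

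One small slip in your write-up: you assert that the cycle $C_0$ through $1$ contributes a factor at least $p(1-p)^2$, but for $|C_0|\geq 5$ there remain adjacency constraints along the arc $g^3(1),\dots,g^{|C_0|-1}(1)$ that you never mention. These cost an additional factor of at least $1-|C_0|\,p^2$, which is harmlessly absorbed into your $(1-mp^2)$ because you already replaced $\sum_{\text{other cycles}}\ell$ by the full $m$; so the displayed inequality survives, but the sentence leading to it is not literally correct as stated.
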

\begin{proof}
For a given $k$, we compute a lower bound for the average size of
\[
S = \bigcup_{x \in T} \{g \in G: g(1) = x\}
- \bigcup_{y \in T} \{g \in G: g(1), g(y) \in T\}
\]
for $T$ running over $k$-element subsets of $\{2, \dots, m\}$.
Each set in the first union contains $n/m$ elements, and they are all disjoint,
so the first union contains $kn/m$ elements. 
To compute the average of a set in the second union, note that for fixed
$g \in G$ and $y \in \{2,\dots,m\}$, 
a $k$-element subset $T$ of $\{1, \dots, m\}$
contains $g(1), y, g(y)$ with probability $\frac{k(k-1)}{m(m-1)}$
if two of the three coincide and $\frac{k(k-1)(k-2)}{m(m-1)(m-2)}$
otherwise. A bit of arithmetic then shows that the average size of $S$
is at least
\[
\frac{kn}{m} - \frac{k^3n}{(m-2)^2}.
\]
Taking $k \sim (m/3)^{1/2}$, we obtain
$\alpha(G) \geq c n/m^{1/2}$. (For any fixed $\epsilon > 0$,
the implied constant can be improved to
$e^{-1} - \epsilon$ for $m$ sufficiently large;
see the proof of Theorem~\ref{T:lower bound}. On the other hand, the
proof as given can be made constructive in case $G$ is doubly transitive,
as then there is no need to average over $T$.)
\end{proof}

This gives a lower bound depending on the parameter $m$, which we can
view as the index of the largest proper subgroup of $G$. To state a bound
depending only on $n$, one needs to know something about the dependence
of $m$ on $n$; by 
Lemma~\ref{L:quotient}, it suffices to prove a lower bound on $m$ in terms
of $n$ for all \emph{simple} nonabelian
groups. I knew this could be done in
principle using the classification of finite
simple groups (CFSG); after some asking around,
I got hold of a manuscript by Liebeck and Shalev
\cite{liebeck-shalev} that included the bound I wanted, leading to
the following result from \cite{kedlaya}.
\begin{theorem}
Under CFSG, the group $G$ admits a transitive action on a set of size $1 < 
m \leq c n^{3/7}$.
Consequently, Theorem~\ref{T:babai-sos} implies
$\alpha(G) \geq cn^{4/7}$, whereas
Theorem~\ref{T:kedlaya} implies
$\alpha(G) \geq c n^{11/14}$.
\end{theorem}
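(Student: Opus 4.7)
The plan is to split the theorem into two pieces: the index bound $m \leq cn^{3/7}$, where CFSG enters essentially, and the consequences for $\alpha(G)$, which follow from the previous theorems together with Lemma~\ref{L:quotient}.

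For the index bound, I would invoke the theorem of Liebeck and Shalev \cite{liebeck-shalev}: every nonabelian finite simple group $S$ contains a proper subgroup of index at most $c|S|^{3/7}$. Their proof is a case analysis along the families supplied by CFSG. For alternating $A_k$, the natural degree-$k$ action gives index barely larger than $\log|A_k|$, well inside the target. For classical groups of Lie type over $\mathbb{F}_q$, one uses the natural actions on points of the associated projective, symplectic, orthogonal or unitary geometry; in each family the index and the group order are polynomials in $q$, and the worst ratio across these families (attained for certain low-rank classical groups) pins the exponent at $3/7$. Exceptional Lie types are handled similarly via their parabolic actions, and the twenty-six sporadic groups by direct inspection from their known subgroup lists.

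To extend to an arbitrary $G$ of order $n>1$, I would split into two cases according to whether $G$ admits a nonabelian simple quotient. If it does, call such a quotient $H$: then $H$ has a proper subgroup of index $m \leq c|H|^{3/7}$, which lifts through the quotient map to a transitive action of $G$ on the same $m$ elements. Theorem~\ref{T:babai-sos} applied to $H$ gives $\alpha(H) \geq |H|/m$, and Lemma~\ref{L:quotient} then promotes this to $\alpha(G) \geq \alpha(H)\cdot n/|H| \geq cn|H|^{-3/7} \geq cn^{4/7}$, since $|H| \leq n$; the parallel computation with Theorem~\ref{T:kedlaya} yields $\alpha(G) \geq cn|H|^{-3/14} \geq cn^{11/14}$. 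Otherwise $G$ admits no nonabelian simple quotient, and in particular cannot be perfect (any simple quotient by a maximal normal subgroup would itself be perfect, hence nonabelian unless trivial). Thus $G^{\mathrm{ab}}$ is a nontrivial abelian quotient, and applying the density bound $2/7$ for abelian groups from the start of Section~2, together with Lemma~\ref{L:quotient}, gives the much stronger estimate $\alpha(G) \geq 2n/7$.

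The principal obstacle is the first step: no classification-free argument is known for bounding the minimum index of a maximal subgroup of a nonabelian finite simple group by a sublinear power of its order, so the Liebeck--Shalev input is essential. Everything else is routine bookkeeping.
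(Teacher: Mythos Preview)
Your argument is correct and follows exactly the route the paper sketches in the paragraph preceding the theorem: Liebeck--Shalev supplies the index bound $m \leq c|S|^{3/7}$ for nonabelian simple $S$, Lemma~\ref{L:quotient} transfers the resulting $\alpha$-bound through quotients, and the abelian $2/7$ density bound covers the residual case where no nonabelian simple quotient exists. The paper itself gives no proof beyond that sketch and the citations to \cite{liebeck-shalev} and \cite{kedlaya}, so you have filled in precisely the intended details.
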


At this point, I was pretty excited to have discovered something interesting
and probably publishable. On the other hand,
I was completely out of ideas! I had no hope of getting any stronger
results, even for specific classes of groups, and it seemed impossible
to derive any nontrivial upper bounds at all. In fact, Babai and S\'os
suggested in their paper that maybe $\beta(G) \geq c$ for all $G$;
I was dubious about this, but I couldn't convince myself that one couldn't
have $\beta(G) \geq c n^{-\epsilon}$ for all $\epsilon > 0$.

So I decided to write this result up by itself, as my first Duluth
paper, and ask Joe for another problem (which naturally he provided).
My paper ended up
appearing as \cite{kedlaya}; I revisited the topic when I was asked to submit
a paper in connection with being named a runner-up for the Morgan Prize
for undergraduate research, the result being \cite{kedlaya-amm}.

I then put this problem in a mental deep freezer,
figuring (hoping?) that my youthful foray into combinatorics 
would be ultimately forgotten, once I had made some headway with some
more serious mathematics, like algebraic number theory or algebraic
geometry. I was reassured by the
expectation that the nonabelian product-free problem was both intractable
and of no interest to anyone, certainly not to any serious mathematician.

Ten years passed.\footnote{If you do not recognize this reference, you may
not have
 read the excellent novel \textit{The Grasshopper King}, by fellow
Duluth REU alumnus Jordan Ellenberg.}

\section{Interlude: back to the future}
\label{sec:interlude}

Up until several weeks before the Duluth conference, I had been planning to
speak about the latest and greatest in algebraic number
theory (the proof of Serre's conjecture linking modular forms
and mod $p$ Galois representations, recently completed by
Khare and Wintenberger). Then I got an email that suggested that
maybe I should try embracing my past instead of running from it.

A number theorist friend  (Michael Schein) reported having attended
an algebra seminar  at Hebrew University about product-free
subsets of finite groups, and hearing my name in this
context. My immediate reaction was to wonder what self-respecting mathematician
could possibly be interested in my work on this problem.
The answer was Tim Gowers, who had recently established a nontrivial
upper bound for $\alpha(G)$ using a remarkably simple argument.

It seems that in the 
ten years since I had moved on to ostensibly more mainstream
mathematics, additive combinatorics had come into its own, thanks partly
to the efforts of no fewer than three Fields medalists (Tim Gowers, Jean
Bourgain, and Terry Tao); some sources date the start of this boom to
Ruzsa's publication in 1994 of a simplified proof \cite{ruzsa} of a theorem of
Freiman on subsets of $\ZZ/p\ZZ$ having few pairwise sums. 
In the process, some interest had spilled over
to nonabelian problems.

The introduction to Gowers's paper \cite{gowers}
cites\footnote{Since Joe is fond of noting ``program firsts'', I 
should point out that 
this appears to be the first citation of a Duluth paper
by a Fields medalist. To my chagrin, I think it
is also the first such citation of any of my papers.}
 my Duluth paper as giving the
best known lower bound on $\alpha(G)$ for general $G$. 
At this point, it became
clear that I had to abandon my previous plan for the conference in favor
of a return visit to my mathematical roots. 

\section{Upper bounds: bipartite Cayley graphs}

In this section, I'll proceed quickly through
Gowers's upper bound construction.
Gowers's paper
\cite{gowers} is exquisitely detailed; 
I'll take that fact as license to be
slightly less meticulous here.

The strategy of Gowers is to consider three sets $A,B,C$
for which there is no true equation
$ab=c$ with $a \in A, b \in B, c \in C$, and give an upper bound on
$\#A \#B \#C$.
To do this, he studies a certain \emph{bipartite Cayley graph}
associated to $G$. Consider the bipartite graph 
$\Gamma$ with vertex set $V_1 \cup V_2$, where each $V_i$
is a copy of $G$, with an edge from $x \in V_1$ to $y \in V_2$
if and only if $yx^{-1} \in A$.
We are then given that there are no edges
between $B \subseteq V_1$ and $C \subseteq V_2$.

A good reflex at this point would be to consider 
the eigenvalues of the adjacency matrix of $\Gamma$. For bipartite graphs,
it is more convenient to do something slightly different using singular values;
although this variant of spectral analysis of graphs is quite natural, I am only
aware of the reference \cite{bollobas-nikiforov} from 2004 (and only
thanks to Gowers for pointing it out).
Let $N$ be the \emph{incidence matrix}, with columns indexed by $V_1$
and rows by $V_2$, with an entry in row $x$ and column $y$ if $xy$
is an edge of $\Gamma$. 
\begin{theorem} \label{T:svd}
We can factor $N$ as a product
$U \Sigma V$ of $\#G \times \#G$ matrices over $\RR$, with $U,V$ orthogonal
and $\Sigma$ diagonal with nonnegative entries.
(This is called a \emph{singular value decomposition} of $N$.)
\end{theorem}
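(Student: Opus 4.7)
The assertion is the standard singular value decomposition applied to $N$; the specific structure of the bipartite Cayley graph is irrelevant, so I would prove the SVD for an arbitrary square real matrix. The only nontrivial input is the spectral theorem for real symmetric matrices.

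First, form $M = N^T N$. This matrix is real, symmetric, and positive semidefinite, since $w^T M w = \|Nw\|^2 \geq 0$ for every $w \in \RR^n$ (here $n = \#G$). By the spectral theorem, there exists an orthogonal matrix $V$ such that $V M V^T = D$ is diagonal, with diagonal entries $\sigma_1^2 \geq \cdots \geq \sigma_n^2 \geq 0$; write $r$ for the number of strictly positive $\sigma_i^2$, and let $v_1, \dots, v_n$ denote the rows of $V$, forming an orthonormal basis of eigenvectors of $M$.

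Next, for $1 \leq i \leq r$, set $u_i = \sigma_i^{-1} N v_i$. A direct computation gives $\langle u_i, u_j \rangle = \sigma_i^{-1} \sigma_j^{-1} v_i^T N^T N v_j = \sigma_i^{-1} \sigma_j^{-1} \sigma_j^2 \delta_{ij} = \delta_{ij}$, so these are orthonormal. For $i > r$, the identity $\|N v_i\|^2 = v_i^T M v_i = 0$ forces $N v_i = 0$. Extend $u_1, \dots, u_r$ to an orthonormal basis $u_1, \dots, u_n$ of $\RR^n$ and let $U$ be the matrix with these as columns. Setting $\Sigma = \mathrm{diag}(\sigma_1, \dots, \sigma_n)$, the identity $N V^T = U \Sigma$ holds column by column ($N v_i = \sigma_i u_i$ in both the $i \leq r$ and $i > r$ cases), so $N = U \Sigma V$.

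There is no real obstacle here: the only nontrivial ingredient is the spectral theorem, which itself follows from a compactness argument maximizing $w^T M w$ on the unit sphere combined with induction on the orthogonal complement of an extremal eigenvector. In the application at hand, the only feature of $N$ that matters is that its entries are real (in fact they lie in $\{0,1\}$), and the singular values $\sigma_i$ encode both the spectral information needed later and, by convention, are labeled in decreasing order so that $\sigma_1$ is the largest.
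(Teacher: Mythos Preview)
Your argument is correct; it is the standard reduction of the singular value decomposition to the spectral theorem for the symmetric positive semidefinite matrix $N^T N$. The paper takes a slightly different, more self-contained route: rather than invoking the spectral theorem as a black box, it runs the compactness/variational argument directly on $N$. One chooses a unit vector $\bv$ maximizing $\|N\bv\|$, checks via a first-derivative computation that $N$ sends the orthogonal complement of $\bv$ into the orthogonal complement of $N\bv$, and then recurses. In effect the paper is proving the spectral theorem and the SVD in one pass, whereas you separate the two steps. Your route is a bit cleaner algebraically once the spectral theorem is granted; the paper's route has the mild advantage of never forming $N^T N$ and of making it transparent that the columns of $V^T$ can be built greedily as successive maximizers of $\|N\bv\|/\|\bv\|$, which is exactly the characterization used in the next lemma.
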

\begin{proof}
(Compare \cite[Theorem~2.6]{gowers}, or see any textbook on numerical
linear algebra.)
By compactness of the unit ball, there is a greatest 
$\lambda$ such that $\|N\bv\| = \lambda \|\bv\|$ for some nonzero $\bv \in 
\RR^{V_1}$.
If $\bv \cdot \bw = 0$, then $f(t) = \|N(\bv + t \bw)\|^2$
has a local maximum at $t=0$, so
\[
0 = \frac{d}{dt} \|N(\bv + t \bw)\|^2 = 2 t (N\bv) \cdot (N\bw).
\]
Apply the same construction to the orthogonal complement of $\RR \bv$ 
in $\RR^{V_1}$.
Repeating, we obtain an orthonormal basis of $\RR^{V_1}$; 
the previous calculation shows that the image of this basis in $\RR^{V_2}$
is also orthogonal. Using these to construct $V,U$ yields the claim.
\end{proof}

The matrix $M = NN^T$ is symmetric, and has
several convenient properties.
\begin{enumerate}
\item[(a)]
The trace of $M$ equals the number of edges of $\Gamma$.
\item[(b)]
The eigenvalues of $M$ are the squares of the diagonal entries of $\Sigma$.
\item[(c)]
Since $\Gamma$ is regular of degree $\#A$ and connected, the
largest eigenvalue of $M$ is $\#A$,
achieved by the all-ones eigenvector $\bfone$.
\end{enumerate}

\begin{lemma} \label{L:subspace}
Let $\lambda$ be the second largest diagonal entry of $\Sigma$.
Then the set $W$ of $\bv \in \RR^{V_1}$ with
$\bv \cdot \bfone = 0$ and $\|N \bv\| = \lambda \|\bv\|$ is 
a nonzero subspace of $\RR^{V_1}$.
\end{lemma}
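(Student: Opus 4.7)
The plan is to reduce to the restriction of $N$ to $\bfone^\perp$, where $\lambda$ becomes the \emph{top} singular value and the set in question is the associated top singular subspace.

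First I would check that $\bfone$ is already one of the right singular vectors. Every row and every column of $N$ contains exactly $\#A$ ones, so $N\bfone = \#A\,\bfone$ and $N^T\bfone = \#A\,\bfone$, making $\bfone$ a right singular vector of $N$ with singular value $\#A$. By property~(c), this is (up to scale) the unique such vector, so the eigenspace of $N^T N$ for eigenvalue $(\#A)^2$ is the line $\RR\bfone$. In particular $\bfone^\perp \subset \RR^{V_1}$ is $N^T N$-invariant, and the eigenvalues of $N^T N$ restricted to $\bfone^\perp$ are precisely the squares of the remaining diagonal entries of $\Sigma$; their maximum is $\lambda^2$.

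Next I would apply the Rayleigh-quotient characterization of the top eigenspace to this restriction. Expanding any $\bv \in \bfone^\perp$ in an orthonormal eigenbasis of $N^T N|_{\bfone^\perp}$ gives
\[
\|N\bv\|^2 \;=\; \bv^T (N^T N) \bv \;\leq\; \lambda^2 \|\bv\|^2,
\]
with equality exactly when $\bv$ lies in the top eigenspace of $N^T N|_{\bfone^\perp}$. Consequently $W$ coincides with this eigenspace, which is visibly a linear subspace of $\RR^{V_1}$; and it is nonzero because the maximum is attained on the compact unit sphere of $\bfone^\perp$, exactly as in the proof of Theorem~\ref{T:svd} (or, more concretely, because one may choose the SVD so that $\bfone/\sqrt{n}$ is the first column of $V^T$, in which case the second column is a nonzero element of $W$).

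The only real point of care — more a subtlety than a genuine obstacle — is that the orthogonality condition $\bv \cdot \bfone = 0$ is essential for subspace structure: without it, $\{\bv : \|N\bv\| = \lambda\|\bv\|\}$ need not be closed under addition, since one can mix $\bfone$ with a singular vector for some singular value strictly below $\lambda$ in suitable ratios to achieve the value $\lambda$ along a nonlinear locus. Projecting out $\bfone$ is precisely what removes this obstruction and turns the defining equality into an eigenspace condition.
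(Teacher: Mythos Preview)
Your argument is correct and is essentially the paper's first proof: both identify $W$ with the top singular subspace of $N$ restricted to $\bfone^{\perp}$, using the SVD from Theorem~\ref{T:svd} (you phrase this via the Rayleigh quotient for $N^TN$, which is the same thing). The paper also records an alternative direct check of closure under addition using the parallelogram law, but your route already suffices.
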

\begin{proof}
(Compare \cite[Lemma~2.7]{gowers}.)
From Theorem~\ref{T:svd}, we obtain an orthogonal basis $\bv_1, \dots, \bv_n$
of $\RR^{V_1}$, with $\bv_1 = \bfone$, such that $N\bv_1, \dots, N\bv_n$
are orthogonal, and $\|N\bv_1\|/\|\bv_1\|, \dots, \|N\bv_n\|/\|\bv_n\|$
are the diagonal entries of $\Sigma$; we may then identify $W$
as the span of the $\bv_i$ with $i > 1$ and $\|N\bv_i\| = \lambda \|\bv_i\|$.

Alternatively, one may note that $W$ is obviously closed under scalar
multiplication, then check that $W$ is closed under addition
as follows.
If $\bv_1, \bv_2 \in W$, then 
$\|N (\bv_1 \pm \bv_2)\| \leq \lambda \|\bv_1 \pm \bv_2\|$, but by
the parallelogram law
\begin{align*}
\|N\bv_1 + N\bv_2\|^2 +
\|N\bv_1 - N\bv_2\|^2 &= 2 \|N\bv_1\|^2 + 2 \|N\bv_2\|^2 \\
&= 2 \lambda^2 \|\bv_1\|^2 + 2 \lambda^2 \|\bv_2\|^2 \\
&= \lambda^2 \|\bv_1 + \bv_2\|^2 + \lambda^2 \|\bv_1 - \bv_2\|^2.
\end{align*}
Hence $\|N (\bv_1 \pm \bv_2)\| = \lambda \|\bv_1 \pm \bv_2\|$.
\end{proof}

Gowers's upper bound on $\alpha(G)$ involves the parameter $\delta$,
defined as the smallest dimension of a 
nontrivial  representation\footnote{One could just as well restrict
to real representations, which would increase $\delta$
by a factor of 2 in some cases. For instance,
if $G = \PSL_2(q)$ with $q \equiv 3 \pmod{4}$, this would give
$\delta = q-1$.}
of $G$.
For instance, if $G = \PSL_2(q)$ with $q odd$, then
then $\delta = (q-1)/2$.

\begin{lemma} \label{L:gowers}
If $\bv \in \RR^{V_1}$ satisfies $\bv \cdot \bfone = 0$, then
$\|N\bv\| \leq (n\#A/\delta)^{1/2} \|\bv\|$.
\end{lemma}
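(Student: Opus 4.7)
The plan is to recast the desired inequality as a statement about the largest eigenvalue of $N^T N$ on the hyperplane $\mathbf{1}^\perp$, and then extract that eigenvalue bound from the $G$-equivariance of $N^T N$ together with a trace inequality. Since $\|N\bv\|^2 = \langle N^T N \bv, \bv \rangle$, and $N^T N$ is positive semidefinite, it suffices to show that every eigenvalue of $N^T N$ orthogonal to the direction $\mathbf{1}$ is at most $n\#A/\delta$. Note that $N^T N$ shares its nonzero spectrum with $M = NN^T$, and a direct calculation shows $(N^T N)_{x,x'} = \#(Ax \cap Ax')$, which depends only on the product $x(x')^{-1}$; thus $N^T N$ is right-convolution on $\RR^G$ by the function $g(h) = \#\{(a,a') \in A \times A : a^{-1}a' = h\}$, and in particular it commutes with the right regular representation of $G$.

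Now I would invoke Peter--Weyl: as a $(G,G)$-bimodule, $\RR^G$ decomposes into isotypic blocks of the form $V_\rho \otimes V_\rho^*$, one per irreducible representation $\rho$ of $G$. Any operator commuting with the right action must, by Schur's lemma, act on the $\rho$-block as $A_\rho \otimes \mathrm{id}_{V_\rho^*}$ for some $A_\rho \in \mathrm{End}(V_\rho)$, so every eigenvalue of $N^T N$ contributed by $\rho$ occurs with multiplicity at least $\dim V_\rho$. The trivial representation contributes only the line $\RR \mathbf{1}$, on which $N^T N$ has eigenvalue $(\#A)^2$; every other irrep has dimension at least $\delta$ by definition.

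To finish, let $\lambda$ be the largest eigenvalue of $N^T N$ on $\mathbf{1}^\perp$, coming from some nontrivial $\rho$. Its multiplicity is at least $\dim V_\rho \geq \delta$, so
\[
\lambda \cdot \delta \;\leq\; \Trace(N^T N) \;=\; \sum_x \#(Ax) \;=\; n\,\#A,
\]
and the lemma follows. The main obstacle is the middle step: one has to recognize $N^T N$ as a $G$-equivariant convolution operator and correctly extract the multiplicity bound $\dim V_\rho \geq \delta$ from the structure of the regular representation; once that is in hand, the trace estimate is immediate.
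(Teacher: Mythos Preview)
Your argument is correct and follows essentially the same strategy as the paper: use the $G$-equivariance of $N$ under right multiplication to show that the relevant eigenspace of $N^TN$ on $\mathbf{1}^\perp$ is a nontrivial $G$-representation of dimension at least $\delta$, then bound $\lambda$ via the trace $\Trace(N^TN)=n\,\#A$. The paper's version is slightly leaner---it simply notes that the space $W$ from Lemma~\ref{L:subspace} is $G$-stable with no nonzero fixed vectors, rather than invoking the full Peter--Weyl isotypic decomposition (which, strictly speaking, takes the form $V_\rho\otimes V_\rho^*$ only after complexifying)---but the content is identical.
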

\begin{proof}
Take $\lambda, W$ as in Lemma~\ref{L:subspace}.
Let $G$ act on $V_1$ and $V_2$ by right multiplication; then $G$
also acts on $\Gamma$. In this manner, $W$ becomes a real representation of $G$
in which no nonzero vector is fixed. In particular, $\dim(W) \geq \delta$.

Now note that the number of edges of $M$, which is
$n\#A$, equals the trace of $M$, which is at least $\dim(W) \lambda^2 \geq
\delta \lambda^2$.
This gives $\lambda^2 \leq n\#A/\delta$, proving the claim.
\end{proof}

We are now ready to prove Gowers's theorem
\cite[Theorem~3.3]{gowers}.
\begin{theorem}[Gowers] \label{T:gowers}
If $A,B,C$ are subsets of $G$ such that there is no true
equation $ab=c$ with $a \in A, b \in B, c \in C$, then
$\#A \#B \#C \leq n^3/\delta$. Consequently, $\beta(G)
\leq \delta^{-1/3}$.
\end{theorem}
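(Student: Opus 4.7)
The plan is to apply Lemma \ref{L:gowers} to the biadjacency matrix $N$ of $\Gamma$ via a Cauchy--Schwarz estimate on the edge count between $B$ and $C$. The hypothesis that there is no solution to $ab=c$ with $a\in A$, $b\in B$, $c\in C$ translates directly into the statement that no edge of $\Gamma$ joins $B\subseteq V_1$ to $C\subseteq V_2$, i.e.\ $\mathbf{1}_C^{T} N \mathbf{1}_B = 0$, where $\mathbf{1}_B,\mathbf{1}_C$ are the indicator vectors of $B$ and $C$ in $\RR^{V_1}$ and $\RR^{V_2}$ respectively.

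First I would decompose these indicators into their components parallel and orthogonal to $\bfone$:
\[
\mathbf{1}_B = \frac{\#B}{n}\bfone + \bv, \qquad \mathbf{1}_C = \frac{\#C}{n}\bfone + \bw,
\]
with $\bv\cdot \bfone = \bw\cdot \bfone = 0$. Since $\Gamma$ is biregular of degree $\#A$ --- each vertex has exactly $\#A$ neighbors because multiplication by a fixed group element is a bijection --- both $N\bfone = \#A\,\bfone$ and $N^{T}\bfone = \#A\,\bfone$. Substituting and using orthogonality to kill the cross terms yields
\[
0 = \mathbf{1}_C^{T} N \mathbf{1}_B = \frac{\#A\, \#B\, \#C}{n} + \bw^{T} N \bv,
\]
so $|\bw^{T} N \bv| = \#A\,\#B\,\#C/n$.

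Next I would bound this inner product from above. By Cauchy--Schwarz and Lemma \ref{L:gowers} (the latter applicable because $\bv\perp\bfone$),
\[
|\bw^{T} N \bv| \leq \|\bw\|\, \|N\bv\| \leq \|\bw\|\,(n\#A/\delta)^{1/2}\,\|\bv\|.
\]
By Pythagoras, $\|\bv\|^2 = \#B - (\#B)^2/n \leq \#B$ and similarly $\|\bw\|^2 \leq \#C$. Combining these bounds gives $\#A\,\#B\,\#C/n \leq (n\#A/\delta)^{1/2}\sqrt{\#B\cdot\#C}$; squaring and canceling $\#A\,\#B\,\#C$ produces the claimed inequality $\#A\,\#B\,\#C \leq n^3/\delta$. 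The consequence $\beta(G)\leq \delta^{-1/3}$ then follows by taking $A=B=C=S$ for $S$ a product-free set of maximum size.

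I do not anticipate a serious obstacle: the spectral heavy lifting has already been done in Lemmas \ref{L:subspace} and \ref{L:gowers}, and the above is essentially the bipartite expander mixing lemma in singular-value form. The one subtlety worth verifying carefully is the biregularity of $\Gamma$, which is what annihilates the cross terms in the expansion and isolates $\#A\,\#B\,\#C/n$ as the ``main term''; this is immediate from the Cayley structure.
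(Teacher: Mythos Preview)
Your proof is correct and takes essentially the same route as the paper's: both subtract the constant component from the indicator of $B$, apply Lemma~\ref{L:gowers} to the orthogonal remainder, and exploit that $N\mathbf{1}_B$ vanishes on the coordinates in $C$. The only cosmetic difference is that you also decompose $\mathbf{1}_C$ and invoke Cauchy--Schwarz explicitly (the standard expander-mixing-lemma formulation), whereas the paper directly lower-bounds $\|N(\mathbf{1}_B - \tfrac{\#B}{n}\bfone)\|^2$ by the sum over the $C$-coordinates, each of which equals $-\#A\,\#B/n$.
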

For example, if $G = \PSL_2(q)$ with $q$ odd, then $n \sim c q^3$, so
$\alpha(G) \leq c n^{8/9}$. On the lower bound side, $G$ admits subgroups of
index $m \sim c q$, so $\alpha(G) \geq c n^{5/6}$.

\begin{proof}
Write $\#A = rn, \#B = sn, \#C = tn$. Let $\bv$ be the characteristic
function of $B$ viewed as an element of $\RR^{V_1}$, and put
$\bw = \bv - s \bfone$. Then 
\begin{align*}
\bw \cdot \bfone &= 0 \\
\bw \cdot \bw &= (1-s)^2 \#B + s^2 (n - \#B) = s(1-s)n \leq sn,
\end{align*}
so by Lemma~\ref{L:gowers}, $\|N\bw\|^2 \leq r n^2sn/\delta$.

Since $ab = c$ has no solutions with $a \in A, b \in B, c \in C$,
each element of $C$ corresponds to a zero entry in 
$N\bv$. However, $N \bv = N \bw + r sn \bfone$, so each zero entry in 
$N \bv$ corresponds to an entry of $N \bw$ equal to $-rsn$. Therefore,
\[
(tn)(rsn)^2 \leq \|N\bw\|^2 \leq rsn^3/\delta,
\]
hence $rst \delta \leq 1$ as desired.
\end{proof}

As noted by Nikolov and Pyber \cite{nikolov-pyber}, the
extra strength in Gowers's theorem is useful for other applications in group
theory, largely via the following corollary.
\begin{cor}[Nikolov-Pyber]
If $A,B,C$ are subsets of $G$ such that $ABC \neq G$,
then
$\#A \#B \#C \leq n^3/\delta$. 
\end{cor}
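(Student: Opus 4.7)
The plan is to reduce the corollary directly to Theorem~\ref{T:gowers} by a simple translation-and-inversion trick on the set $C$. The hypothesis $ABC \neq G$ furnishes an element $g \in G$ that is not of the form $abc$ for any $a \in A, b \in B, c \in C$. I want to repackage this ``missing element'' into the statement that some product equation $ab = c'$ has no solution, at which point Gowers's theorem applies.

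Concretely, I would set $C' = g C^{-1} = \{g c^{-1} : c \in C\}$. Note that $\#C' = \#C$, since left multiplication by $g$ and inversion are both bijections of $G$. Now suppose there were $a \in A$, $b \in B$, and $c' \in C'$ with $ab = c'$. Writing $c' = g c^{-1}$ for some $c \in C$, this gives $ab = g c^{-1}$, hence $abc = g$, contradicting the choice of $g \notin ABC$. So the triple $(A, B, C')$ satisfies exactly the hypothesis of Theorem~\ref{T:gowers}.

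Applying Theorem~\ref{T:gowers} to $(A,B,C')$ yields $\#A \#B \#C' \leq n^3/\delta$, and substituting $\#C' = \#C$ gives the desired inequality $\#A \#B \#C \leq n^3/\delta$. There is no real obstacle here: the content is entirely in Gowers's theorem, and the only thing to check is that the transformation $C \mapsto gC^{-1}$ converts ``$g$ is missing from $ABC$'' into ``no equation $ab=c'$ with $c' \in C'$,'' which is immediate from the computation $ab = gc^{-1} \iff abc = g$.
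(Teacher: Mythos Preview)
Your proof is correct. The paper takes a slightly more roundabout route: it argues by contrapositive, assuming $\#A\,\#B\,\#C > n^3/\delta$ and showing $ABC = G$. Setting $D = G \setminus AB$, it applies Theorem~\ref{T:gowers} to the triple $(A,B,D)$ (for which $ab=d$ has no solutions by construction), obtaining $\#A\,\#B\,\#D \leq n^3/\delta$ and hence $\#D < \#C$. Then for any $g \in G$ one has $\#(AB) + \#(gC^{-1}) > n$, so $AB \cap gC^{-1} \neq \emptyset$, i.e.\ $g \in ABC$. Your version applies Gowers directly to $(A,B,gC^{-1})$ for a single missing $g$, which is cleaner; the paper's detour through $D = G \setminus AB$ is not needed for the corollary as stated, though it does isolate the side fact that $AB$ itself must already be large whenever $\#A\,\#B$ is.
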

\begin{proof}
Suppose that $\#A \#B \#C > n^3/\delta$.
Put $D = G \setminus AB$,
so that $\#D = n - \#(AB)$. 
By Theorem~\ref{T:gowers}, we have $\#A \#B \#D \leq n^3/\delta$,
so $\#C > \#D$. Then for any $g \in C$, the sets $AB$ and $gC^{-1}$
have total cardinality more than
$n$, so they must intersect. This yields $ABC = G$.
\end{proof}

Gowers indicates that his motivation for this argument was the
notion of a \emph{quasi-random graph} introduced by
Chung, Graham, and Wilson \cite{cgw}. They show that (in a suitable
quantitative sense) a graph
looks random in the sense of having the right number of short cycles
if and only if it also looks random from the spectral viewpoint, i.e.,
the second largest eigenvalue of its adjacency matrix is not too large.

\section{Coda}

As noted by Nikolov and Pyber \cite{nikolov-pyber},
using CFSG to get a strong quantitative version of Jordan's theorem on finite
linear groups, one can produce upper and lower bounds for $\alpha(G)$ that 
look similar. (Keep in mind that the index of a proper subgroup must be
at least $\delta+1$, since any permutation representation of degree $m$
contains a linear representation of dimension $m-1$.)

\begin{theorem}
Under CFSG, the group $G$ has a proper subgroup of index 
at most $c \delta^2$. Consequently, 
\[
c n/\delta \leq \alpha(G) \leq cn/\delta^{1/3}.
\]
\end{theorem}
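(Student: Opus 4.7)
The upper bound $\alpha(G) \le cn/\delta^{1/3}$ is just a restatement of Theorem~\ref{T:gowers}, so all the work lies in the lower bound. I would split the argument into two independent parts: (i) produce a proper subgroup $H < G$ of index $m \le c\delta^2$; and (ii) extract the lower bound on $\alpha(G)$ from such an $H$ via Theorem~\ref{T:kedlaya}.

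Part (ii) is essentially a one-line deduction. Given such an $H$, the left action of $G$ on $G/H$ is transitive of degree $m > 1$, so Theorem~\ref{T:kedlaya} gives
\[
\beta(G) \ge c\, m^{-1/2} \ge c\,(c\delta^2)^{-1/2} = c'\,\delta^{-1},
\]
hence $\alpha(G) \ge c''\, n/\delta$. The parenthetical remark preceding the theorem---that every permutation representation of degree $m$ contains a linear subrepresentation of dimension $m-1$, whence $m \ge \delta+1$---tells us that Part~(i) is asking for a converse to this elementary inequality up to a polynomial loss.

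For Part~(i), the plan uses CFSG. Pick a minimal normal subgroup $N$ of $G$; by standard theory $N$ is either an elementary abelian $p$-group or a direct product $T^k$ of isomorphic nonabelian simple groups. The nonabelian case is the heart: by the Landazuri--Seitz--Feit bounds on minimal complex representation degrees together with case-by-case inspection of the natural actions of the finite simple groups (point stabilizers in $A_n$; stabilizers of singular subspaces of the natural module for the classical Lie families; explicit knowledge for the exceptional and sporadic groups), every nonabelian finite simple group $T$ has a proper subgroup of index at most $c\,\delta(T)$. One transfers this bound from $T$ to $N = T^k$ and then to $G$, paying a polynomial cost in $|G:N|$, which is itself controlled by the embedding $G/C_G(N) \hookrightarrow \mathrm{Aut}(N)$. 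The abelian case is handled by an analogous induction on $|G|$, since $G/C_G(N)$ acts faithfully on $N$.

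The main obstacle is precisely this CFSG-based inventory: one must know, uniformly across all families of finite simple groups, that the minimum index of a proper subgroup is bounded linearly in $\delta(T)$. The quadratic slack (index $\le c\delta^2$ rather than $c\delta$) is built in to absorb the bookkeeping losses in the non-simple reduction, and it also explains why the resulting lower bound $cn/\delta$ still lies a factor of $\delta^{2/3}$ below Gowers's upper bound $cn/\delta^{1/3}$---the \emph{headline gap} that motivates the remaining discussion in the paper.
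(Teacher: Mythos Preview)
The paper does not give its own proof of this theorem. The subgroup-index bound is attributed to Nikolov and Pyber \cite{nikolov-pyber}, with the single hint that their argument proceeds ``using CFSG to get a strong quantitative version of Jordan's theorem on finite linear groups''; the word ``Consequently'' then hides exactly your Part~(ii), namely feeding the index bound $m \le c\delta^2$ into Theorem~\ref{T:kedlaya} for the lower estimate and quoting Theorem~\ref{T:gowers} for the upper one. That deduction is correct and is precisely what the paper intends.

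For Part~(i), your route differs from the one the paper signals. Nikolov and Pyber package the CFSG input as a structural statement about finite subgroups of $GL_\delta(\CC)$ (a sharp Jordan-type theorem) and read off the subgroup-index bound from that. You instead propose a direct inventory: pick a minimal normal subgroup $N$, handle the simple factors $T$ via Landazuri--Seitz--Feit, and lift. Both strategies ultimately rest on the same CFSG data, but the Jordan-theorem formulation absorbs the passage from general $G$ to simple groups into a single citable result, whereas in your outline the lifting steps (from $T$ to $T^k$ to $G$, and the control of $[G:N]$ through $G/C_G(N) \hookrightarrow \mathrm{Aut}(N)$) are exactly where the quadratic exponent has to be earned rather than asserted. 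Your abelian-$N$ branch is also too thin: for $G$ cyclic of prime order one has $\delta = 1$ and no proper subgroup of bounded index, so the subgroup clause as literally stated already requires a caveat that neither you nor the paper makes explicit, even though the bounds on $\alpha(G)$ survive trivially in that regime.
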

Moreover, for many natural examples (e.g., $G = A_m$ or $G = \PSL_2(q)$),
$G$ has a proper subgroup of index at most $c\delta$, in which case one has
\[
c n/\delta^{1/2} \leq \alpha(G) \leq cn/\delta^{1/3}.
\]
Since the gap now appears quite small, one might ask about closing it.
However, one can adapt the argument of \cite{kedlaya}
to show that Gowers's argument alone will not suffice, at least for
families of groups with $m \leq c \delta$.
(Gowers proves some additional results about products taken more than
two at a time \cite[\S 5]{gowers}; 
I have not attempted to extend this construction to that setting.)
\begin{theorem} \label{T:lower bound}
Given $\epsilon > 0$,
for $G$ admitting a transitive action on $\{1,\dots,m\}$ for $m$
sufficiently large,
there exist $A,B,C \subseteq G$ with $(\#A)(\#B)(\#C) \geq 
(e^{-1}-\epsilon)n^3/m$,
such that the equation $ab=c$ has no solutions with $a \in A, b \in B,
c \in C$. Moreover, we can force $B=C$, $C=A$, or $A=B^{-1}$ if desired.
\end{theorem}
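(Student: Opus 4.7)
The plan is to parametrize the three sets by a single random $k$-element subset $T \subseteq \{2,\ldots,m\}$ with $k = \lceil \sqrt{m} \rceil$, then estimate the expected product $|A||B||C|$ via Burnside's lemma and the cycle structure of elements of $G$. I will construct the $B = C$ case directly; the variants $C = A$ and $A = B^{-1}$ then follow from it by the involutions $(A,B,C) \mapsto (B^{-1}, A^{-1}, C^{-1})$ and $(A,B,C) \mapsto (C, B^{-1}, A)$, both of which preserve the condition ``no solutions to $ab=c$'' as well as the product $|A||B||C|$. Set
\[
A = \{g \in G : g(T) \cap T = \emptyset\}, \qquad B = C = \{g \in G : g(1) \in T\}.
\]
Product-freeness is immediate: if $ab = c$ with $a \in A$, $b \in B$, $c \in C$, then $b(1) \in T$ forces $c(1) = a(b(1)) \in a(T)$, which is disjoint from $T$, contradicting $c(1) \in T$. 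By transitivity $|B| = |C| = kn/m$ holds exactly, so the whole theorem reduces to showing $E_T[|A|] \geq (e^{-1} - \epsilon) n$ for $m$ sufficiently large.

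By linearity, $E_T[|A|] = \sum_{g \in G} \Pr_T(g(T) \cap T = \emptyset)$, so it suffices to lower-bound each term. Fix $g$ with $f = f(g)$ fixed points, and decompose the remaining $m-f$ points into nontrivial cycles of $g$; the event $g(T) \cap T = \emptyset$ requires $T$ to miss every fixed point and to restrict to an independent set on each nontrivial cycle. Under the Bernoulli model with parameter $p = k/m$, independence on a cycle of length $\ell$ is governed by the $2 \times 2$ transfer matrix whose dominant eigenvalue is $\lambda_+ = \bigl((1-p) + \sqrt{(1-p)(1+3p)}\bigr)/2$; a direct squaring shows $\lambda_+ \geq 1 - p^2$, while the subdominant eigenvalue satisfies $|\lambda_-| \leq p$. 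Its contribution summed over all cycles is $o(1)$ for $p = 1/\sqrt{m}$, and the transfer from Bernoulli to uniform-size-$k$ sampling is handled by concentration of the binomial $|T|$ around its mean $k$. Together this yields
\[
\Pr_T(g(T) \cap T = \emptyset) \;\geq\; (1-p)^{f}(1-p^2)^{m-f}(1 - o(1)) \;=\; (1-p)^m (1+p)^{m-f}(1 - o(1)).
\]

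Summing over $g$, Burnside's lemma (transitivity gives $\sum_g f(g) = n$) combined with Jensen's inequality for the convex function $x \mapsto (1+p)^{-x}$ yields $\sum_g (1+p)^{-f(g)} \geq n/(1+p)$, hence
\[
E_T[|A|] \;\geq\; (1 - o(1)) \cdot n (1 - p^2)^m / (1+p),
\]
which with $p = 1/\sqrt{m}$ tends to $n/e$ as $m \to \infty$. Consequently some $T$ achieves $|A| \geq (e^{-1} - \epsilon)n$, giving $(\#A)(\#B)(\#C) \geq (e^{-1} - \epsilon) n \cdot (kn/m)^2 \geq (e^{-1} - \epsilon) n^3/m$. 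The main obstacle is the Bernoulli-to-uniform transfer in the per-$g$ bound, most cleanly done by a saddle-point estimate on $[x^k] \prod_i L_{\ell_i}(x)$, where $L_\ell(x) = \sum_j N(j,\ell) x^j$ is the independent-set generating function of the cycle $C_\ell$; this identifies the dominant contribution with the Bernoulli bound and confirms the desired $1 - o(1)$ approximation.
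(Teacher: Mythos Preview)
Your construction of $A,B,C$ coincides with the paper's (in its $U=V$ specialization), and your product-freeness check and your reduction of the variants $C=A$ and $A=B^{-1}$ to the case $B=C$ via the two involutions are exactly what the paper does. The divergence is entirely in how $E_T[\#A]$ is estimated.

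The paper's device is to average over \emph{two independent} random sets $U,V$ of size $\lfloor\sqrt m\rfloor$, with $A=\{g:g(U)\cap V=\emptyset\}$, $B=\{g:g(1)\in U\}$, $C=\{g:g(1)\in V\}$. The payoff is that each Bonferroni term $E\bigl[\binom{|g(U)\cap V|}{i}\bigr]=\binom u i\binom v i/\binom m i$ is \emph{the same for every $g$}: with $U,V$ independent, $g(U)$ and $V$ are just two independent uniform subsets, so no cycle structure enters. Summing over $g$ contributes a bare factor of $n$, and the truncated alternating series tends to $e^{-1}$. No Burnside, no Jensen, no transfer matrices, no model change. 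The paper then obtains $B=C$ by setting $U=V$, remarking only that this is possible; the inclusion-exclusion does pick up $g$-dependent correction terms in that case, but they are lower order.

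Your single-set route forces the per-$g$ probability to see the cycle type of $g$, whence the transfer-matrix bound, then Burnside plus Jensen to aggregate. The spectral inequality $\lambda_+\ge 1-p^2$ is correct, and in fact $\lambda_+^\ell+\lambda_-^\ell\ge(1-p^2)^\ell$ holds for every $\ell\ge 2$, so the subdominant contribution causes no loss at all in the Bernoulli model. The genuine gap is the one you yourself flag: passing from the Bernoulli$(p)$ model to a uniform $k$-subset. Concentration of $|T|$ alone does not suffice, because the event $\{g(T)\cap T=\emptyset\}$ is monotone decreasing in $T$ and, for $g$ with many fixed points, has Bernoulli probability that is exponentially small in $\sqrt m$; the relative error in a naive transfer can swamp the signal for such $g$. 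Your saddle-point sketch on $[x^k]\prod_i L_{\ell_i}(x)$ is a plausible repair but is not carried out, and making it uniform in $g$ (or arguing separately that high-fixed-point $g$ are negligible after the transfer) is where the real work lies. The paper's two-independent-sets trick simply sidesteps this difficulty.
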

\begin{proof}
We first give a quick proof of the lower bound $cn^3/m$.
Let $U,V$ be subsets of $\{1,\dots,m\}$ of respective sizes $u,v$.
Put
\begin{align*}
A &= \{g \in G: g(U) \cap V = \emptyset\} \\
B &= \{g \in G: g(1) \in U\} \\
C &= \{g \in G: g(1) \in V\};
\end{align*}
then clearly the equation $ab=c$ has no solutions with $a \in A, b \in B,
c \in C$. On the other hand,
\[
\#A \geq n - u \frac{vn}{m}, \qquad
\#B = \frac{un}{m},
\qquad
\#C = \frac{vn}{m},
\]
and so
\[
(\#A)(\#B)(\#C) \geq \frac{n^3}{m} 
\left( \frac{uv}{m} \right) \left( 1 - \frac{uv}{m} \right).
\]
By taking $u,v = \lfloor \sqrt{m/2} \rfloor$, we obtain 
$(\#A)(\#B)(\#C) \geq cn^3/m$.

To optimize the constant, we must average over choices of $U,V$.
Take $u,v = \lfloor \sqrt{m} \rfloor$. By
inclusion-exclusion, for any positive integer $h$,
the average of $\#A$ is bounded below by
\[
\sum_{i=0}^{2h-1} (-1)^i n \frac{u(u-1)\cdots(u-i+1)v(v-1)\cdots(v-i+1)}{i! m(m-1)
\cdots (m-i+1)}.
\]
(The $i$-th term counts occurrences of $i$-element subsets in $g(U) \cap V$.
We find $\binom{v}{i}$ $i$-element sets inside $V$; on average,
each one occurs
inside $g(U)$ for $n \binom{u}{i} / \binom{m}{i}$ choices of $g$.)
Rewrite this as
\[
n \left( \sum_{i=0}^{2h-1} (-1)^i \frac{(m^{1/2})^i 
(m^{1/2})^i}{m^i i!} +
o(1) \right),
\]
where $o(1) \to 0$ as $m \to \infty$.
For any $\epsilon > 0$, we have
\[
(\#A)(\#B)(\#C) \geq n^3 \frac{m}{m^2} 
\left( e^{-1} - \epsilon \right)
\]
for $h$ sufficiently large, and 
$m$ sufficiently large depending on $h$. 
This gives the desired lower bound.

Finally, note that we may achieve $B=C$ by taking $U = V$. 
To achieve the other equalities,
note that if the triplet $A,B,C$ has the desired property, so do 
$B^{-1},A^{-1},C^{-1}$ and $C,B^{-1},A$.
\end{proof}

I have no idea whether one can sharpen
Theorem~\ref{T:gowers} under the hypothesis $A=B = C$ 
(or even just $A=B$). It might be enlightening to collect some
numerical evidence using examples generated by Theorem~\ref{T:kedlaya};
with Xuancheng Shao, we have done this for $\PSL_2(q)$ for $q \leq 19$.

I should also mention again that (as suggested in
\cite{kedlaya-amm}) one can also study product-free
subsets of compact topological groups, which are large for Haar measure. Some such 
study is implicit in \cite[\S 4]{gowers}, but we do not
know what explicit bounds come out.

\end{document}